\newtheorem{theorem}{Theorem}[section]
\newtheorem{lemma}[theorem]{Lemma}
\newtheorem{proposition}[theorem]{Proposition}
\newtheorem{corollary}[theorem]{Corollary}
\newtheorem{remark}[theorem]{Remark}
\newtheorem{conjecture}[theorem]{Conjecture}
\numberwithin{equation}{section}
\DeclareMathOperator{\pd}{pd}
\DeclareMathOperator{\gldim}{gl.dim}
\DeclareMathOperator{\fdim}{fin.dim}
\DeclareMathOperator{\sgldim}{sgl.dim}
\title[Skew group algebras]{Finitistic dimensions and piecewise hereditary property of skew group algebras}
\author{Liping Li}
\address{Department of Mathematics, University of California, Riverside, CA, 92521.}
\email{lipingli@math.ucr.edu}
\thanks{The author would like to thank the referee for carefully reading and checking this paper, and for his/her valuable comments.}
\begin{document}

\begin{abstract}
Let $\Lambda$ be a finite dimensional algebra and $G$ be a finite group whose elements act on $\Lambda$ as algebra automorphisms. Under the assumption that $\Lambda$ has a complete set $E$ of primitive orthogonal idempotents, closed under the action of a Sylow $p$-subgroup $S \leqslant G$. If the action of $S$ on $E$ is free, we show that the skew group algebra $\Lambda G$ and $\Lambda$ have the same finitistic dimension, and have the same strong global dimension if the fixed algebra $\Lambda^S$ is a direct summand of the $\Lambda^S$-bimodule $\Lambda$. Using a homological characterization of piecewise hereditary algebras proved by Happel and Zacharia, we deduce a criterion for $\Lambda G$ to be piecewise hereditary.
\end{abstract}

\keywords{skew group algebras, finitistic dimension, piecewise hereditary, strong global dimension.}
\subjclass[2010]{16G10, 16E10.}
\maketitle

\section{Introduction}

Throughout this note let $\Lambda$ be a finite dimensional $k$-algebra, where $k$ is an algebraically closed field with characteristic $p \geqslant 0$, and let $G$ be a finite group whose elements act on $\Lambda$ as algebra automorphisms. The \textit{skew group algebra} $\Lambda G$ is the vector space $\Lambda \otimes_k kG$ equipped with a bilinear product $\cdot$ determined by the following rule: for $\lambda, \mu \in \Lambda$, $g, h \in G$, $(\lambda \otimes g) \cdot (\mu \otimes h) = \lambda g(\mu) \otimes gh$, where $g(\mu)$ is the image of $\mu$ under the action of $g$. We write $\lambda g$ rather than $\lambda \otimes g$ to simplify the notation. Correspondingly, the product can be written as $\lambda g \cdot \mu h = \lambda g(\mu) gh$. Denote the identity of $\Lambda$ and the identity of $G$ by $1_{\Lambda}$ and $1_G$ respectively.

It has been observed that when $|G|$, the order of $G$, is invertible in $k$, $\Lambda G$ and $\Lambda$ share many common properties \cite{Dionne, Martinez, Reiten}. We wonder for arbitrary groups $G$, under what conditions this phenomen still happens. This problem is considered in \cite{Li}, where under the hypothesis that $\Lambda$ has a complete set $E$ of primitive orthogonal idempotents closed under the action of a Sylow $p$-subgroup $S \leqslant G$, we show that $\Lambda$ and $\Lambda G$ share certain properties such as finite global dimension, finite representation type, etc., if and only if the action of $S$ on $E$ is free. Clearly, this answer generalizes results in \cite{Reiten} since if $|G|$ is invertible in $k$, the only Sylow $p$-subgroup of $G$ is the trivial group.

In this note we continue to study representations and homological properties of modular skew group algebras. Using the ideas and techniques described in \cite{Li}, we show that $\Lambda$ and $\Lambda G$ share more common properties under the same hypothesis and condition. Explicitly, we have:

\begin{theorem}
Let $\Lambda$ and $G$ be as above, and let $S \leqslant G$ be a Sylow $p$-subgroup. $\Lambda$ has a complete set $E$ of primitive orthogonal idempotents closed under the action of $S$. Then:
\begin{enumerate}
\item If the action of $S$ on $E$ is free, then $\Lambda G$, $\Lambda$, and $\Lambda^S$ (the fixed algebra by $S$) have the same finitistic dimension.
\item $\Lambda G$ has finite strong global dimension if and only if $\Lambda ^S$ has finite strong global dimension and $S$ acts freely on $E$. In this situation, $\Lambda G$ and $\Lambda ^S$ have the same strong global dimension; moreover, if $\Lambda \cong \Lambda^S \oplus B$ as $\Lambda^S$-bimodules, then $\Lambda S$ and $\Lambda$ have the same strong global dimension.
\end{enumerate}
\end{theorem}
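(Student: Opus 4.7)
The plan is to bridge the four algebras through two intermediate reductions: $\Lambda G\leftrightarrow\Lambda S$ via the invertibility of $[G:S]$ in $k$, and $\Lambda S\leftrightarrow\Lambda^{S}\leftrightarrow\Lambda$ via the freeness of the $S$-action on $E$.

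For part~(1) I would first establish $\fdim\Lambda G=\fdim\Lambda S$. Since $[G:S]\in k^{\times}$, the averaging $m\mapsto \tfrac{1}{[G:S]}\sum_{gS\in G/S}g\otimes g^{-1}m$ splits the multiplication $\Lambda G\otimes_{\Lambda S}M\to M$ for every $\Lambda G$-module $M$, exhibiting $M$ as a direct summand of the induction of its restriction; combined with freeness of $\Lambda G$ over $\Lambda S$ on both sides, this gives $\pd_{\Lambda G}(M)=\pd_{\Lambda S}(M)$. I would then show $\fdim\Lambda S=\fdim\Lambda^{S}$: fix a transversal $\bar e_{1},\dots,\bar e_{m}$ of the $S$-orbits on $E$ and set $\bar e=\sum_{i}\bar e_{i}\in\Lambda\hookrightarrow\Lambda S$; freeness gives $1_{\Lambda}=\sum_{g\in S}g(\bar e)$ with each $g(\bar e)=g\bar e g^{-1}$ inside $\Lambda S$, so $\bar e$ is a full idempotent and $\Lambda S$ is Morita equivalent to $\bar e(\Lambda S)\bar e$, which a direct calculation (again using freeness) identifies with $\Lambda^{S}$. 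Finally, the Galois-type isomorphism $\Lambda S\cong\End_{\Lambda^{S}}(\Lambda)$ produced by the free action makes $\Lambda$ a faithfully flat, finitely generated projective generator over $\Lambda^{S}$; comparing projective resolutions through the resulting induction-restriction pair then yields $\fdim\Lambda=\fdim\Lambda^{S}$ and completes~(1).

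For part~(2), the equivalence (2a) and the equality (2b) are obtained by re-running the reductions above with $\sgldim$ in place of $\fdim$, since the splitting in Step~1 and the Morita equivalence in Step~2 both preserve the length of minimal bounded projective resolutions. The ``only if'' in (2a) is already in \cite{Li}: $\sgldim\Lambda G<\infty$ forces $\gldim\Lambda G<\infty$, which by the main result of \cite{Li} forces the free action of $S$. For the last assertion (2c), the splitting $\Lambda\cong\Lambda^{S}\oplus B$ as $\Lambda^{S}$-bimodules makes $\Lambda^{S}\hookrightarrow\Lambda$ a split ring extension, so that a minimal bounded projective $\Lambda$-resolution of a module contains a minimal $\Lambda^{S}$-resolution of its restriction as a direct summand; this forces $\sgldim\Lambda=\sgldim\Lambda^{S}$, and combining with Step~2 gives $\sgldim\Lambda S=\sgldim\Lambda$.

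The main obstacle is the last stage of each part, namely linking $\Lambda^{S}$ to $\Lambda$: non-invertibility of $|S|$ in $k$ rules out any direct averaging on the $\Lambda$-side, so one must carefully exploit the Galois/Morita structure produced by the free action (for~(1)) and the extra bimodule-splitting hypothesis (for~(2c)) in order to carry the equality across this final step.
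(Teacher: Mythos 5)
Your overall skeleton ($\Lambda G \leftrightarrow \Lambda S$ by averaging over $G/S$, then $\Lambda S \leftrightarrow \Lambda^S$ by Morita equivalence) matches the paper, but the final link in part (1) has a genuine gap. You propose to prove $\fdim \Lambda = \fdim \Lambda^S$ directly from the fact that $\Lambda$ is a finitely generated projective generator over $\Lambda^S$ with $\End_{\Lambda^S}(\Lambda) \cong \Lambda S$. That endomorphism-ring statement only yields the Morita equivalence between $\Lambda S$ and $\Lambda^S$, which you already have; it says nothing about the pair of categories $\Lambda$-mod and $\Lambda^S$-mod. The induction--restriction pair along $\Lambda^S \hookrightarrow \Lambda$ gives only the one-sided inequalities $\pd_{\Lambda^S}(M\downarrow) \leqslant \pd_\Lambda M$ and $\pd_\Lambda(\Lambda \otimes_{\Lambda^S} V) \leqslant \pd_{\Lambda^S} V$; to close the loop you would need the unit $V \to (\Lambda\otimes_{\Lambda^S} V)\downarrow$ (or the counit) to split, and that requires a $\Lambda^S$-bimodule retraction $\Lambda \to \Lambda^S$. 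The paper explicitly notes (citing Example 3.6 of \cite{Li}) that $\Lambda$ need not split off $\Lambda^S$ as a bimodule, which is exactly why that splitting appears only as the extra hypothesis in the last clause of part (2). (A ring extension $B\subseteq A$ with $A$ free on both sides over $B$ does not in general preserve finitistic dimension: take $B=k$.)

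The paper closes the loop by a different mechanism, comparing $\Lambda$ with $\Lambda S$ rather than with $\Lambda^S$: the inequality $\fdim \Lambda \leqslant \fdim \Lambda S$ comes from the split inclusion $V \to V\uparrow_1^S\downarrow$ (which does split, because $\Lambda S$ is free over $\Lambda$ with basis $S$), and the reverse inequality $\fdim \Lambda S \leqslant \fdim \Lambda$ is proved by restricting a minimal $\Lambda S$-projective resolution to $\Lambda$ and invoking the key fact — valid precisely because $S$ acts freely on $E$ — that a $\Lambda S$-module is projective if and only if its restriction to $\Lambda$ is projective (Proposition 2.3(7)). This detection-of-projectivity lemma is the ingredient your argument is missing, and no formal adjunction argument substitutes for it. Your treatment of part (2) is essentially the paper's: the equivalence and the equality $\sgldim \Lambda G = \sgldim \Lambda^S$ follow by rerunning the splitting argument on perfect complexes (not on projective resolutions of modules, a minor imprecision in your write-up) together with the Morita equivalence, and for the last clause the bimodule splitting $\Lambda \cong \Lambda^S \oplus B$ is used exactly as you say, to split the unit $P^\bullet \to (\Lambda \otimes_{\Lambda^S} P^\bullet)\downarrow$ and obtain $\sgldim \Lambda^S \leqslant \sgldim \Lambda$, the opposite inequality coming from the subgroup comparison already established.
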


In \cite{Dionne} it has been proved that if $\Lambda$ is a piecewise hereditary algebra (defined in Section 3) and $|G|$ is invertible in $k$, then $\Lambda G$ is piecewise hereditary as well. The second part of the above theorem generalizes this result by using the homological characterization of piecewise hereditary algebras by Happel and Zacharia in \cite{Happel2}

We introduce some notations and conventions here. Throughout this note all modules are finitely generated left modules. Composition of maps and morphisms is from right to left.  For an algebra $A$, $\gldim A$, $\fdim A$, and $\sgldim A$ are the global dimension, finitistic dimension, and strong global dimension (defined in Section 4) of $A$ respectively. For an $A$-module $M$, $\pd_A M$ is the projective dimension of $M$. We use $A$-mod to denote the category of finitely generated $A$-modules. Its bounded homotopy category and bounded derived category are denoted by $K^b(A)$ and $D^b(A)$ respectively.

\section{projective dimensions and finitistic dimensions}

We first describe some background knowledge and elementary results. Most of them can be found in literature. We suggest the reader to refer to \cite{Auslander, Boisen, Cohen, Li, Marcus, Martinez, Passman, Reiten, Zhong} for more details.

For every subgroup $H \leqslant G$, elements in $H$ also act on $\Lambda$ as algebra automorphism, so we can define a skew group algebra $\Lambda H$, which is a subalgebra of $\Lambda G$. The induction functor and the restriction functor are defined in the usual way. For a $\Lambda H$ module $V$, the induced module is $V \uparrow _H^G = \Lambda G \otimes _{\Lambda H} V$, where $\Lambda G$ acts on the left side. Every $\Lambda G$-module $M$ can be viewed as a $\Lambda H$-module, denoted by $M \downarrow _H^G$. Observe that $\Lambda G$ is a free (both left and right) $\Lambda H$-module. Therefore, these two functors are exact, and perverse projective modules.

\begin{proposition}
Let $H \leqslant G$ be a subgroup. Then:
\begin{enumerate}
\item Every $\Lambda H$-module $V$ is isomorphic to a summand of $V \uparrow _H^G \downarrow _H^G$.
\item If $|G:H|$ is invertible in $k$, then every $\Lambda G$-module $M$ is isomorphic to a summand of $M \downarrow _H^G \uparrow _H^G$.
\end{enumerate}
\end{proposition}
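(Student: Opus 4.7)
The plan is to reduce both parts to the classical Mackey-style splittings, using the fact — recalled just above the proposition — that $\Lambda G$ is a free (left and right) $\Lambda H$-module. Concretely, I would fix left coset representatives $G=\bigsqcup_{i=1}^n g_iH$ with $g_1=1_G$, so that $\Lambda G=\bigoplus_i g_i\cdot \Lambda H$ as a right $\Lambda H$-module, and tensoring with $V$ gives a $k$-linear splitting $V\uparrow_H^G=\bigoplus_i g_i\otimes V$.

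For part (1), I would take as the candidate copy of $V$ inside $V\uparrow_H^G\downarrow_H^G$ the summand $1_G\otimes V$. A direct computation using $\lambda h\cdot 1_G=\lambda h$ in $\Lambda G$ and the tensor relation over $\Lambda H$ shows that $v\mapsto 1_G\otimes v$ is $\Lambda H$-linear with image isomorphic to $V$. A retraction comes from the observation that the vector-space complement $\bigoplus_{i\geqslant 2} g_i\otimes V$ is preserved by the left $\Lambda H$-action: the left action of $H$ on $G/H$ fixes the coset $1_GH$ and permutes the remaining cosets among themselves, so projection onto $1_G\otimes V$ is $\Lambda H$-linear, giving the desired direct-summand splitting.

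For part (2), the hypothesis that $|G:H|$ is invertible in $k$ permits the standard averaging argument. I would introduce the obvious multiplication map $\mu\colon \Lambda G\otimes_{\Lambda H}M\to M$, $a\otimes m\mapsto a\cdot m$, which is $\Lambda G$-linear and surjective, and define a candidate section
\[
\sigma\colon M\longrightarrow \Lambda G\otimes_{\Lambda H}M,\qquad m\longmapsto \frac{1}{|G:H|}\sum_{i=1}^n g_i\otimes g_i^{-1}m.
\]
Independence of the choice of representatives (replacing $g_i$ by $g_ih$ with $h\in H$ lets one absorb $h$ across the tensor) and the identity $\mu\circ\sigma=\mathrm{id}_M$ are both routine.

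The main obstacle is verifying that $\sigma$ is $\Lambda G$-linear, since this is the one place where the twist in the skew multiplication has to be tracked. Expanding $\lambda g\cdot\sigma(m)=\frac{1}{|G:H|}\sum_i \lambda gg_i\otimes g_i^{-1}m$, I would write $gg_i=g_{i'}h_i$ for the induced permutation of cosets (so that $h_ig_i^{-1}=g_{i'}^{-1}g$), absorb $h_i$ across the tensor, and then use the defining identity $\lambda g_{i'}=g_{i'}\cdot g_{i'}^{-1}(\lambda)$ to rewrite each term as $g_{i'}\otimes g_{i'}^{-1}(\lambda gm)$. Reindexing $i\mapsto i'$ produces exactly $\sigma(\lambda gm)$. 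Once this bookkeeping is complete, $\sigma$ splits $\mu$ and $M$ appears as a summand of $M\downarrow_H^G\uparrow_H^G$.
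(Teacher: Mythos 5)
Your proof is correct, and it is essentially the argument the paper intends: the paper's own ``proof'' here is only a citation to Proposition 2.1 of \cite{Li}, but the four maps you build --- $v\mapsto 1_G\otimes v$ together with the coset-projection retraction onto $1_G\otimes V$, and the averaging section $m\mapsto \frac{1}{|G:H|}\sum_i g_i\otimes g_i^{-1}m$ splitting the multiplication map --- are precisely the maps $\iota,\delta,\theta,\rho$ that the paper later lifts to the chain level in the proof of Lemma 3.2. The one step that genuinely needs care, tracking the skew twist via $\lambda g_{i'}=g_{i'}\cdot g_{i'}^{-1}(\lambda)$ so that each term lands in $g_{i'}\otimes V$ (resp.\ becomes $g_{i'}\otimes g_{i'}^{-1}(\lambda g m)$), you handle correctly.
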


\begin{proof}
This is Proposition 2.1 in \cite{Li}.
\end{proof}

The above proposition immediately implies:

\begin{corollary}
Let $H \leqslant G$ be a subgroup. For every $M \in \Lambda G$-mod, $\pd _{\Lambda H} M \downarrow _H^G \leqslant \pd_{\Lambda G} M$. If $|G: H|$ is invertible in $k$, then the equality holds.
\end{corollary}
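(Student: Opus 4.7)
The plan is to use the observations already recorded in the text: both $\uparrow_H^G$ and $\downarrow_H^G$ are exact, and both preserve projectivity because $\Lambda G$ is free as a left and as a right $\Lambda H$-module. Once this is in place, the inequality and the equality reduce to standard projective-resolution manipulations, together with Proposition 2.2.

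For the first inequality, I would let $n = \pd_{\Lambda G} M$ (assumed finite, else there is nothing to prove) and fix a projective resolution
\[
0 \to P_n \to P_{n-1} \to \cdots \to P_0 \to M \to 0
\]
of $M$ in $\Lambda G$-mod. Restricting via $\downarrow_H^G$, exactness is preserved, and each $P_i \downarrow_H^G$ is projective as a $\Lambda H$-module since $\Lambda G$ is free over $\Lambda H$. This yields a projective resolution of $M \downarrow_H^G$ of length at most $n$, so $\pd_{\Lambda H} M \downarrow_H^G \leqslant n = \pd_{\Lambda G} M$.

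For the reverse inequality under the assumption that $|G:H|$ is invertible in $k$, I would set $m = \pd_{\Lambda H} M \downarrow_H^G$ and take a projective resolution of $M \downarrow_H^G$ in $\Lambda H$-mod of length $m$. Applying the exact induction functor $\uparrow_H^G$, and using that $\uparrow_H^G$ carries projective $\Lambda H$-modules to projective $\Lambda G$-modules (again because $\Lambda G$ is a projective right $\Lambda H$-module), we obtain a projective resolution of $M \downarrow_H^G \uparrow_H^G$ in $\Lambda G$-mod of length at most $m$. Hence $\pd_{\Lambda G} M \downarrow_H^G \uparrow_H^G \leqslant m$. By Proposition 2.2(2), $M$ is isomorphic to a direct summand of $M \downarrow_H^G \uparrow_H^G$, and projective dimension does not increase when passing to a direct summand, so $\pd_{\Lambda G} M \leqslant m$. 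Combined with the previous paragraph this gives equality.

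Neither step looks like a serious obstacle: the main content has already been packaged into the preceding proposition and into the exactness/projectivity-preservation remarks. The only point to be slightly careful about is tracking that induction really preserves projectives (which follows from $\Lambda G \cong \bigoplus_{g} \Lambda H \cdot g$ as a right $\Lambda H$-module), and that Proposition 2.2(2) is genuinely available in the setting where $|G:H|$ is invertible in $k$, which is exactly the hypothesis of the second statement.
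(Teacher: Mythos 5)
Your proposal is correct and follows essentially the same route as the paper: restrict a (minimal) projective resolution to get the inequality, then induce a resolution of $M \downarrow _H^G$ and use that $M$ is a summand of $M \downarrow _H^G \uparrow _H^G$ to get the reverse inequality when $|G:H|$ is invertible. The only discrepancy is a labelling one — the direct-summand statement you cite is Proposition 2.1(2) in the paper, not 2.2(2).
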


\begin{proof}
Take a minimal projective resolution $P^{\bullet} \rightarrow M$ of $\Lambda G$-modules and apply the restriction functor termwise. Since this functor is exact and preserves projective modules, we get a projective resolution $P^{\bullet} \downarrow _H^G \rightarrow M \downarrow _H^G$ of $\Lambda H$-modules, which might not be minimal. Thus $\pd _{\Lambda H} M \downarrow _H^G \leqslant \pd_{\Lambda G} M$, which is true even for the case that either $\pd _{\Lambda H} M \downarrow _H^G = \infty$ or $\pd_{\Lambda G} M = \infty$.

Now suppose that $|G:H|$ is invertible in $k$. Take a minimal projective resolution $Q^{\bullet} \rightarrow M \downarrow _H^G$ of $\Lambda H$-modules and apply the induction functor termwise. Since it is exact and preserves projective modules, we get a projective resolution $Q^{\bullet} \uparrow _H^G \rightarrow M \downarrow _H^G \uparrow _H^G$ of $\Lambda G$-modules, so $\pd _{\Lambda G} M \downarrow _H^G \uparrow _H^G \leqslant \pd_{\Lambda H} M \downarrow _H^G$. But $M$ is isomorphic to a summand of $M \downarrow _H^G \uparrow _H^G$, so $\pd _{\Lambda G} M \leqslant \pd_{\Lambda H} M \downarrow _H^G$. Putting two inequalities together, we get the equality.
\end{proof}

Let $E = \{ e_i \} _{i \in [n]}$ be a set of primitive orthogonal idempotents in $\Lambda$. We say it is \textit{complete} if $\sum _{i \in [n]} e_i = 1_{\Lambda}$. Throughout this note we assume that $G$ has a Sylow $p$-subgroup $S$ such that $E$ is closed under the action of $S$. That is, $g(e_i) \in E$ for all $i \in [n]$ and $g \in S$. In practice this condition is usually satisfied. A trivial case is that $|G|$ is invertible in $k$, and hence $S$ is the trivial subgroup.

We introduce some notations here. Let $\Lambda ^S$ be the space consisting of all elements in $\Lambda$ fixed by $S$. Clearly, $\Lambda ^S$ is a subalgebra of $\Lambda$, and $S$ acts on it trivially. For every $M \in \Lambda S$-mod, elements $v \in M$ satisfying $g (v) = v$ for every $g \in S$ form a $\Lambda ^S$-module, which is denoted by $M^S$. Let $F^S$ be the functor from $\Lambda S$-mod to $\Lambda ^S$-mod, sending $M$ to $M^S$. This is indeed a functor since for every $\Lambda S$-module homomorphism $f: M \rightarrow N$, $M^S$ is mapped into $N^S$ by $f$.

We collect in the following proposition some results taken from \cite{Li}.

\begin{proposition}
Let $S \leqslant G$ and $E$ be as above and suppose that $E$ is closed under the action of $S$. Then:
\begin{enumerate}
\item The set $E$ is also a complete set of primitive orthogonal idempotents of $\Lambda S$, where we identify $e_i$ with $e_i 1_S$.
\item $\Lambda ^S = \{ \sum_{g \in S} g(\mu) \mid \mu \in \Lambda \}$.
\item The global dimension $\gldim \Lambda S < \infty$ if and only if $\gldim \Lambda < \infty$ and the action of $S$ on $E$ is free.
\end{enumerate}
Moreover, if the action of $S$ on $E$ is free, then
\begin{enumerate}
\setcounter{enumi}{3}
\item $\Lambda S$ is a matrix algebra over $\Lambda^S$, and hence is Morita equivalent to $\Lambda^S$.
\item The functor $F^S$ is exact.
\item The regular representation $_{\Lambda S} \Lambda S \cong \Lambda ^{|S|}$, where $\Lambda$ is the trivial $\Lambda S$-module.
\item A $\Lambda S$-module $M$ is projective (resp., injective) if and only if the $\Lambda$-module $_{\Lambda} M$ is projective (resp., injective).
\end{enumerate}
\end{proposition}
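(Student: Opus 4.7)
The plan is to treat this as a collection of seven related facts from \cite{Li}; the direct route is to cite that source. For an independent argument, I would split the statements into those that follow from closedness of $E$ under $S$ alone (item (1) and the forward direction of (3)) and those that require the freeness hypothesis (items (2), (4)--(7), and the reverse of (3)), handling the two groups in sequence.

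For (1), orthogonality and completeness of $\{e_i 1_S\}$ in $\Lambda S$ are immediate from $\lambda g \cdot \mu h = \lambda g(\mu) gh$. For primitivity, let $S_i \leqslant S$ be the stabilizer of $e_i$ (a $p$-subgroup), and identify $e_i (\Lambda S) e_i$ with the skew group algebra of the local algebra $e_i \Lambda e_i$ by $S_i$. Since $k S_i$ is local in characteristic $p$, a standard radical argument concludes that this skew product is itself local.

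Under the freeness hypothesis, (2) follows from an orbit argument: picking an orbit representative $e_{i_j}$ from each $S$-orbit in $E$ and using $1_\Lambda = \sum_j \sum_{g \in S} g(e_{i_j})$, one checks that for $\nu \in \Lambda^S$ the element $\mu = \sum_j e_{i_j} \nu$ satisfies $\sum_g g(\mu) = \nu$. For (3), the forward direction combines Corollary 2.2 (with $H = S$) with the fact that a nontrivial stabilizer $S_i$ embeds a $k S_i$-module of infinite projective dimension into $\Lambda S$-mod; the reverse direction follows from (4) and (7), which together transfer projective resolutions between $\Lambda S$ and $\Lambda$.

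The remaining items hinge on (4). A progenerator for $\Lambda S$-mod is $P = \Lambda S \cdot f$ with $f = \sum_j e_{i_j}$, and a direct computation (using (2)) identifies $\End_{\Lambda S}(P)^{\mathrm{op}}$ with $\Lambda^S$, realizing $\Lambda S$ as an $|S| \times |S|$ matrix algebra over $\Lambda^S$. For (6), decompose $\Lambda S = \bigoplus_i \Lambda S e_i$ and observe that the $|S|$ summands inside each orbit are mutually isomorphic, so grouping one per orbit reconstructs a copy of $\Lambda$ and the whole sum yields $|S|$ copies. Then (5) follows from (6), since $\Lambda$ is a direct summand of the regular $\Lambda S$-module and hence projective in $\Lambda S$-mod, while one checks directly that $F^S(-) \cong \Hom_{\Lambda S}(\Lambda, -)$ via $\phi \mapsto \phi(1_\Lambda)$. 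Finally, (7) is immediate from (4), since Morita equivalences preserve both projectives and injectives. The main obstacle throughout is the orbit-and-idempotent bookkeeping required in (2), (4), and (6).
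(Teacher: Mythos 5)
The paper's own ``proof'' of this proposition is a one-line citation to Lemmas 3.1, 3.2, 3.6 and Propositions 3.3, 3.5 of \cite{Li}, so your primary route --- citing that source --- is exactly what the paper does, and your sketch of how those facts are actually proved is broadly the right outline. You also make a genuinely good observation: item (2) as displayed really does need the freeness hypothesis even though the numbering places it before the ``moreover'' (for $\Lambda = k$ with the necessarily trivial $S$-action, $\sum_{g \in S} g(\mu) = |S|\mu = 0$ in characteristic $p$, while $\Lambda^S = k$), and your orbit-representative argument for it is correct under freeness.

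Two steps of the independent sketch, however, are not yet proofs. First, in (1) the identification $e_i(\Lambda S)e_i \cong (e_i\Lambda e_i)S_i$ is not literally true: one has $e_i(\Lambda S)e_i = \bigoplus_{g \in S} e_i\Lambda g(e_i)\, g$, and the cross terms with $g \notin S_i$ need not vanish (e.g.\ when $\Lambda e_i \cong \Lambda g(e_i)$ as projectives), so you must separately argue that those terms lie in the radical, or otherwise show the full idempotent subalgebra is local. Second, and more seriously, (7) is not ``immediate from (4)'': the Morita equivalence relates $\Lambda S$ to $\Lambda^S$, whereas (7) asserts that projectivity and injectivity over $\Lambda S$ are detected by restriction to the subalgebra $\Lambda$ --- a statement about a different functor entirely. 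The ``only if'' direction is easy ($\Lambda S$ is free over $\Lambda$), but the ``if'' direction cannot be obtained by the usual averaging splitting of $\Lambda S \otimes_\Lambda M \twoheadrightarrow M$ since $|S| = 0$ in $k$; it is precisely the place where freeness of the action on $E$ does real work (via item (6) or the explicit matrix decomposition), and it needs to be supplied. Since you derive the reverse implication of (3) from (7), that gap propagates there as well.
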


\begin{proof}
See Lemmas 3.1, 3.2, 3.6 and Propositions 3.3, 3.5 in \cite{Li}.
\end{proof}

Recall for a finite dimensional algebra $A$, its \textit{finitistic dimension}, denoted by $\fdim A$, is the supremum of projective dimensions of finitely generated indecomposable $A$-modules $M$ with $\pd_A M < \infty$. If $A$ has finite global dimension, then $\fdim A = \gldim A$. The famous finitistic dimension conjecture asserts that the finitistic dimension of every finite dimensional algebra is finite. For more details, see \cite{Zimmermann}.

An immediate consequence of Proposition 2.1 and Corollary 2.2 is:

\begin{lemma}
Let $H \leqslant G$ be a subgroup of $G$. Then $\fdim \Lambda H \leqslant \fdim \Lambda G$. If $|G:H|$ is invertible in $k$, the equality holds.
\end{lemma}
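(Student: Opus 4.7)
The plan is to feed the induction and restriction functors, together with the inequalities of Corollary 2.2 and the summand statements of Proposition 2.1, into the definition of finitistic dimension. Since for a finitely generated module the projective dimension equals the maximum of the projective dimensions of its indecomposable summands, the supremum defining $\fdim$ may equivalently be taken over all finitely generated modules of finite projective dimension; I will use this silently below.

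For the inequality $\fdim \Lambda H \leqslant \fdim \Lambda G$, I would start with an arbitrary finitely generated $\Lambda H$-module $V$ with $d := \pd_{\Lambda H} V < \infty$, and chain three facts. First, applying the induction functor to a minimal $\Lambda H$-projective resolution of $V$ — using that induction is exact and sends projectives to projectives — yields $\pd_{\Lambda G} (V \uparrow_H^G) \leqslant d < \infty$. Second, Corollary 2.2 applied to $V \uparrow_H^G$ gives $\pd_{\Lambda H} ((V\uparrow_H^G)\downarrow_H^G) \leqslant \pd_{\Lambda G}(V\uparrow_H^G)$. Third, by Proposition 2.1(1), $V$ is a direct summand of $(V\uparrow_H^G)\downarrow_H^G$, so $\pd_{\Lambda H} V \leqslant \pd_{\Lambda H}((V\uparrow_H^G)\downarrow_H^G)$. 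Concatenating these three estimates, $d \leqslant \pd_{\Lambda G}(V \uparrow_H^G) \leqslant \fdim \Lambda G$, and taking the supremum over $V$ delivers the desired inequality.

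For the reverse inequality under the hypothesis that $|G:H|$ is invertible in $k$, I would start with an arbitrary finitely generated $\Lambda G$-module $M$ satisfying $d := \pd_{\Lambda G} M < \infty$. The second half of Corollary 2.2 asserts the equality $\pd_{\Lambda H}(M\downarrow_H^G) = d$, so in particular $M\downarrow_H^G$ is a $\Lambda H$-module of finite projective dimension $d$, whence $d \leqslant \fdim \Lambda H$. Taking the supremum over $M$ yields $\fdim \Lambda G \leqslant \fdim \Lambda H$, and combined with the first part this gives equality.

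There is no real obstacle: both halves are bookkeeping on top of Proposition 2.1 and Corollary 2.2. The only point that must be verified carefully is that $\pd_{\Lambda G}(V\uparrow_H^G)$ is actually finite before invoking $\fdim \Lambda G$ as an upper bound, which is guaranteed by the exactness of induction and its preservation of projectives in the very first step.
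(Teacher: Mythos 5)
Your proposal is correct and follows essentially the same route as the paper: induction of a minimal resolution to get finiteness of $\pd_{\Lambda G}(V\uparrow_H^G)$, then Corollary 2.2 and the summand statement of Proposition 2.1 to chain the inequalities, and the equality half of Corollary 2.2 for the reverse direction. Your explicit remark that the supremum defining $\fdim$ may be taken over all finitely generated modules of finite projective dimension (not just indecomposables) is a point the paper leaves implicit when it bounds $\pd_{\Lambda G}\tilde{V}$ by $\fdim\Lambda G$, and it is the right thing to make precise.
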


\begin{proof}
Take an arbitrary indecomposable $V \in \Lambda H$-mod with $\pd _{\Lambda H} V < \infty$ and consider $\tilde{V} = V \uparrow _H^G$. We claim that $\pd _{\Lambda G} \tilde{V} < \infty$. Indeed, by applying the induction functor to a minimal projective resolution $P^{\bullet} \rightarrow V$ termwise, we get a projective resolution $\Lambda G \otimes _{\Lambda H} P^{\bullet} \rightarrow \tilde{V}$. The finite length of the first resolution implies the finite length of the second one. Therefore, $\pd _{\Lambda G} \tilde{V} < \infty$. Consequently, $\pd _{\Lambda G} \tilde{V} \leqslant \fdim \Lambda G$.

By Corollary 2.2, $\pd_{\Lambda G} \tilde{V} \geqslant \pd_{\Lambda H} \tilde{V} \downarrow _H^G$. Since $V$ is isomorphic to a summand of $\tilde{V} \downarrow _H^G$ by Proposition 2.1, we get $\fdim A \geqslant \pd _{\Lambda G} \tilde{V} \geqslant \pd_{\Lambda H} \tilde{V} \downarrow _H^G \geqslant \pd _{\Lambda H} V$. In conclusion, $\fdim \Lambda G \geqslant \fdim \Lambda H$.

Now suppose that $|G : H|$ is invertible in $k$. Take an arbitrary indecomposable $M \in \Lambda G$-mod with $\pd _{\Lambda G} M < \infty$. By applying the restriction functor $\downarrow _H^G$ to a minimal projective resolution $Q^{\bullet} \rightarrow M$ termwise, we get a projective resolution of $M \downarrow _H^G$, which is of finite length. Therefore, $\pd _{\Lambda H} M \downarrow _H^G \leqslant \fdim \Lambda H$. By Corollary 2.2, we have $\pd _{\Lambda G} M = \pd _{\Lambda H} M \downarrow _H^G \leqslant \fdim \Lambda H$. In conclusion, $\fdim \Lambda G \leqslant \fdim \Lambda H$. Combining this with the inequality in the previous paragraph, we have $\fdim \Lambda G = \fdim \Lambda H$.
\end{proof}

\begin{lemma}
If a Sylow $p$-subgroup $S \leqslant G$ acts freely on $E$, then $\fdim \Lambda S \leqslant \fdim \Lambda$.
\end{lemma}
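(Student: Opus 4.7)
The plan is to take an arbitrary indecomposable $M \in \Lambda S$-mod with $\pd_{\Lambda S} M < \infty$ and show $\pd_{\Lambda S} M \leqslant \fdim \Lambda$. The key idea is that a minimal $\Lambda S$-resolution of $M$ restricts to a (possibly non-minimal) $\Lambda$-resolution, and the freeness hypothesis lets us move projectivity of syzygies back across the restriction.

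First I would apply Corollary 2.2 with $H$ the trivial subgroup, which gives $d := \pd_\Lambda (M\downarrow) \leqslant \pd_{\Lambda S} M < \infty$. In particular $d \leqslant \fdim \Lambda$. Next, pick any projective resolution $P^\bullet \to M$ in $\Lambda S$-mod, and let $K = \Omega^d_{\Lambda S} M$ be its $d$-th syzygy. Restricting to $\Lambda$ yields a $\Lambda$-projective resolution of $M\downarrow$ (restriction is exact and preserves projectives), and the $d$-th syzygy in that resolution is precisely $K\downarrow$. Since this is the $d$-th syzygy in a $\Lambda$-projective resolution of a module of projective dimension $d$, a standard Schanuel argument shows $K\downarrow$ is projective as a $\Lambda$-module.

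Now the freeness hypothesis enters through Proposition 2.3(7): because $S$ acts freely on $E$, a $\Lambda S$-module is projective if and only if its underlying $\Lambda$-module is projective. Since $K\downarrow$ is $\Lambda$-projective, $K$ itself is $\Lambda S$-projective. Therefore the $\Lambda S$-resolution $P^\bullet$ may be truncated at step $d$ to produce a $\Lambda S$-projective resolution of $M$ of length at most $d$, giving $\pd_{\Lambda S} M \leqslant d \leqslant \fdim \Lambda$. Taking the supremum over all such $M$ proves $\fdim \Lambda S \leqslant \fdim \Lambda$.

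The main obstacle is nothing beyond correctly identifying $\Omega^d_{\Lambda S} M$ restricted to $\Lambda$ as a $\Lambda$-syzygy of $M\downarrow$ and then applying Proposition 2.3(7) in the nontrivial direction (from $\Lambda$-projectivity to $\Lambda S$-projectivity), which is exactly where the hypothesis that $S$ acts freely on $E$ is used.
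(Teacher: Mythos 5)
Your proof is correct and rests on the same two ingredients as the paper's: restricting a $\Lambda S$-projective resolution of $M$ to a $\Lambda$-projective resolution of $M\downarrow$, and invoking Proposition 2.3(7) to pull projectivity of a syzygy back from $\Lambda$ to $\Lambda S$. The only difference is bookkeeping — the paper argues by contradiction that the minimal $\Lambda$-syzygies of $M\downarrow$ remain nonzero up to step $n=\pd_{\Lambda S}M$, while you truncate at $d=\pd_{\Lambda}(M\downarrow)$ — and both routes amount to showing $\pd_{\Lambda S}M=\pd_{\Lambda}(M\downarrow)$.
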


\begin{proof}
Take an arbitrary indecomposable $M \in \Lambda S$-mod with $\pd _{\Lambda S} M = n < \infty$ and a minimal projective resolution:
\begin{equation*}
\ldots \rightarrow 0 \rightarrow P^n \rightarrow \ldots \rightarrow P^1 \rightarrow P^0 \rightarrow M \rightarrow 0.
\end{equation*}
Regarded as $\Lambda$-modules, we get a projective resolution:
\begin{equation*}
\ldots \rightarrow 0 \rightarrow _{\Lambda} P^n \rightarrow \ldots \rightarrow _{\Lambda} P^1 \rightarrow _{\Lambda} P^0 \rightarrow _{\Lambda}M \rightarrow 0.
\end{equation*}
Clearly, for every $0 \leqslant s \leqslant n$, the syzygy $\Omega^s (M)$ viewed as a $\Lambda$-module is a direct sum of $\Omega^s (_{\Lambda} M)$ and a projective $\Lambda$-module. We claim that for each $0 \leqslant s \leqslant n$, the syzygy $\Omega^s (_{\Lambda} M) \neq 0$. Otherwise, $\Omega^s (M)$ viewed as a $\Lambda$-module is projective. By (7) of Proposition 2.3, $\Omega^s (M)$ is a projective $\Lambda S$-module, which must be 0. But this implies $\pd _{\Lambda S} M < n$, contradicting our choice of $M$. Therefore, for each $0 \leqslant s \leqslant n$, $\Omega^s (_{\Lambda} M) \neq 0$. Consequently, $_{\Lambda} M$ has a summand with projective dimension $n$, so $n \leqslant \fdim \Lambda$. In conclusion, $\fdim \Lambda S \leqslant \fdim \Lambda$.
\end{proof}

Now we can prove the first statement of Theorem 1.1.

\begin{proposition}
If $G$ has a Sylow $p$-subgroup $S$ acting freely on $E$, then $\Lambda G$ and $\Lambda$ have the same finitistic dimension.
\end{proposition}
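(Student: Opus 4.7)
The plan is to combine the two inequalities already available and establish the missing reverse direction. Since $S$ is a Sylow $p$-subgroup, $|G:S|$ is coprime to $p$, hence invertible in $k$, so Lemma 2.4 gives $\fdim \Lambda G = \fdim \Lambda S$. Lemma 2.5 then yields $\fdim \Lambda G = \fdim \Lambda S \leqslant \fdim \Lambda$. Thus the only thing left to prove is the reverse inequality $\fdim \Lambda \leqslant \fdim \Lambda G$, which does not appear in either lemma.

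For this, I would take an arbitrary indecomposable $M \in \Lambda\text{-mod}$ with $\pd_\Lambda M = n < \infty$ and induce it up to $\tilde{M} = \Lambda G \otimes_\Lambda M$. Because $\Lambda G$ is free as a right $\Lambda$-module with basis $G$, the induction functor $\Lambda G \otimes_\Lambda -$ is exact, and it sends a projective $\Lambda e_i$ to $\Lambda G e_i$, which is a projective $\Lambda G$-module (here $e_i$ remains an idempotent in $\Lambda G$). Applying induction termwise to a minimal projective resolution of $M$ of length $n$ produces a projective resolution of $\tilde{M}$ of length at most $n$, so $\pd_{\Lambda G} \tilde{M} \leqslant n < \infty$.

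Next I would analyze the restriction of $\tilde{M}$ back to $\Lambda$. As a left $\Lambda$-module, $\tilde{M} \cong \bigoplus_{g \in G} g \otimes M$, and each summand $g \otimes M$ is isomorphic to $M$ twisted by the algebra automorphism $g^{-1}$; since twisting by an automorphism is a self-equivalence of $\Lambda\text{-mod}$ it preserves projective dimensions, so every summand still has projective dimension $n$, and therefore $\pd_\Lambda({}_\Lambda \tilde{M}) = n$. Applying Corollary 2.2 to the trivial subgroup of $G$ gives $n = \pd_\Lambda({}_\Lambda \tilde{M}) \leqslant \pd_{\Lambda G} \tilde{M} \leqslant \fdim \Lambda G$. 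Since $\tilde{M}$ has finite projective dimension, some indecomposable $\Lambda G$-summand of $\tilde{M}$ achieves $\pd_{\Lambda G} \tilde{M}$, so this bound genuinely contributes to $\fdim \Lambda G$. Taking the supremum over $M$ yields $\fdim \Lambda \leqslant \fdim \Lambda G$, and combining with the previous chain completes the proof.

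The only subtle point, and hence the main obstacle, is recognizing that induction from $\Lambda$ up to $\Lambda G$ does not lose projective dimension: one needs the twisted copies of $M$ sitting inside ${}_\Lambda \tilde{M}$ together with the general inequality from Corollary 2.2 to transfer the lower bound $n$ back up to the $\Lambda G$-level. Everything else is a direct application of the lemmas proved earlier in Section 2.
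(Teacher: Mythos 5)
Your proof is correct and follows the same skeleton as the paper's: $\fdim \Lambda G = \fdim \Lambda S$ from Lemma 2.4 since $|G:S|$ is invertible, and $\fdim \Lambda S \leqslant \fdim \Lambda$ from Lemma 2.5. The one inefficiency is that the reverse inequality you prove by hand is not actually missing from the lemmas: Lemma 2.4 applied to the trivial subgroup $H = \{1_G\}$ (for which $\Lambda H = \Lambda$) already gives $\fdim \Lambda \leqslant \fdim \Lambda S$, which is exactly how the paper closes the loop; your direct argument --- inducing $M$ up to $\Lambda G$, observing that the restriction ${}_{\Lambda}\tilde{M}$ decomposes into $|G|$ twists of $M$ each of projective dimension $n$, and transferring the bound back via Corollary 2.2 --- is a correct re-derivation of that special case, with the twisted-summand decomposition standing in for the appeal to Proposition 2.1(1) made in the paper's proof of Lemma 2.4.
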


\begin{proof}
Since $|G : S|$ is invertible in $k$, by Lemma 3.1, $\fdim \Lambda G = \fdim \Lambda S$. Also by this lemma, $\fdim \Lambda S \geqslant \fdim \Lambda$ since $S$ contains the trivial group. The previous lemma tells us $\fdim \Lambda S \leqslant \fdim \Lambda$. Putting all these pieces of information together, we get $\fdim \Lambda G = \fdim \Lambda$ as claimed.
\end{proof}

From the previous proposition we conclude immediately that if the action of $S$ on $E$ is free, then the finiteness of finitistic dimension of $\Lambda$ implies the finiteness of finitistic dimension of $\Lambda G$. We wonder whether this conclusion is true in general. That is,

\begin{conjecture}
Let $\Lambda, G, E$ and $S$ be as before and suppose that $E$ is closed under the action of $S$. If $\fdim \Lambda < \infty$ (or even stronger $\gldim \Lambda < \infty)$, then $\fdim \Lambda G < \infty$.
\end{conjecture}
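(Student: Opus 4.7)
The plan is to reduce to the case $G = S$ and then attempt to bound the $\Lambda S$-projective dimensions of finite-pd modules uniformly in terms of $\fdim \Lambda$ and the stabilizer structure of $S$ on $E$. Since $|G:S|$ is invertible in $k$, Lemma 3.1 yields $\fdim \Lambda G = \fdim \Lambda S$, so it suffices to prove $\fdim \Lambda S < \infty$ given $\fdim \Lambda < \infty$. For any $M \in \Lambda S$-mod with $n = \pd_{\Lambda S} M < \infty$, Corollary 2.2 applied to the trivial subgroup gives $\pd_\Lambda M \leqslant n$, and hence $\pd_\Lambda M \leqslant \fdim \Lambda$; the goal is to reverse this and bound $n$ from above.

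The main obstruction is that the proof of Lemma 3.2 relied on Proposition 2.3(7)---``projective over $\Lambda S$ iff projective over $\Lambda$''---which fails without the free-action hypothesis. Without it, a $\Lambda S$-module can be $\Lambda$-projective yet nonprojective over $\Lambda S$, so the $\Lambda S$- and $\Lambda$-syzygies may diverge and the direct syzygy argument breaks down.

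I would proceed by induction on $|S|$. Choose a normal subgroup $N \triangleleft S$ of index $p$, which exists since $S$ is a finite $p$-group. Because $E$ remains closed under $N$, the inductive hypothesis yields $\fdim \Lambda N < \infty$, and it then remains to deduce $\fdim \Lambda S < \infty$ from $\fdim \Lambda N < \infty$ in the special case $|S : N| = p$. One natural attempt is to view $\Lambda S$ as a skew group algebra of the cyclic group $S/N$ over $\Lambda N$, reducing to the case where the group has prime order $p$. An orthogonal approach is to split $E = E_f \sqcup E_s$ into the subsets with trivial and with nontrivial $S$-stabilizer, let $\varepsilon = \sum_{e \in E_s} e \in \Lambda^S$, and exploit a recollement or idempotent comparison relating $\Lambda S$ to $\varepsilon (\Lambda S) \varepsilon$ and $(\Lambda S) / (\Lambda S \varepsilon \Lambda S)$, with Lemma 3.2 controlling the free complement.

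The hardest part will be controlling $\Lambda S$-syzygies locally at a fixed idempotent $e \in E$ with nontrivial stabilizer $S_e$: the corner algebra $e(\Lambda S) e$ contains the group algebra $k S_e$ of a nontrivial $p$-group, over which even the trivial module has infinite projective dimension. One must quarantine this infinite-pd local behavior from the rest and show that the residual, bounded contribution is governed by $\fdim \Lambda$ plus a constant depending only on $|S|$. Without such a local-to-global separation, even the base case $|S| = p$ of the induction seems to require essentially new input.
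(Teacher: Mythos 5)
This statement is labeled a \emph{conjecture} in the paper, and the author gives no proof of it; the text explicitly leaves it open (remarking only that a proof might shed light on the finitistic dimension conjecture itself). So there is no ``paper's own proof'' to compare against, and your proposal should be judged on its own terms as an attempted resolution of an open problem. On those terms it is not a proof: you correctly carry out the easy reduction to $G=S$ via Lemma 3.1, and you correctly diagnose why the paper's argument for Lemma 3.3 collapses without the free-action hypothesis (Proposition 2.3(7) fails, so a $\Lambda S$-module can be $\Lambda$-projective without being $\Lambda S$-projective, and the syzygy comparison breaks). But everything after that is a list of strategies whose crucial steps are left open, and you say so yourself in the last sentence.

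Two concrete gaps are worth naming. First, the induction on $|S|$ via a normal subgroup $N\triangleleft S$ of index $p$ does not reduce to a skew group algebra of $S/N$ over $\Lambda N$: the extension $1\to N\to S\to S/N\to 1$ need not split (e.g.\ $S=\mathbb{Z}/p^2$), so $\Lambda S$ is in general only a crossed product of $S/N$ over $\Lambda N$, and none of the paper's lemmas are stated for crossed products. Second, and more seriously, the recollement/idempotent approach with $\varepsilon=\sum_{e\in E_s}e$ requires homological control of the ideal $\Lambda S\,\varepsilon\,\Lambda S$ (e.g.\ that it be stratifying, or of finite projective dimension as a one-sided module) before any of the known finitistic-dimension transfer results along $\varepsilon(\Lambda S)\varepsilon$ and $\Lambda S/\Lambda S\varepsilon\Lambda S$ apply; you neither verify such a condition nor indicate why it should hold here, and this is exactly where the infinite-pd behavior of $kS_e$ for $e$ with nontrivial stabilizer re-enters. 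The observation that $\fdim$ only ranges over modules of finite projective dimension means the presence of $kS_e$ inside a corner is not by itself fatal, but turning that into a uniform bound is the whole problem, and it remains untouched. In short: the conjecture is still a conjecture, both in the paper and after your attempt.
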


Hopefully the proof of this question can shed light on the final proof of the finitistic dimension conjecture.

\section{strong global dimensions and piecewise hereditary algebras}

A finite dimensional $k$-algebra $A$ is called \textit{piecewise hereditary} if the derived category $D^b(A)$ is equivalent to the derived category $D^b(\mathcal{H})$ of a hereditary abelian category $\mathcal{H}$ as triangulated categories; see \cite{Happel1, Happel2, Happel3, HRS}. If $A$ is piecewise hereditary, then $\gldim A < \infty$ since the property having finite global dimension is invariant under derived equivalence \cite{Happel1}. Therefore, $D^b(A)$ is triangulated equivalent to the homotopy category $K^b (_A \mathcal{P})$, where $_A \mathcal{P}$ is the full subcategory of $A$-mod consisting of all finitely generated projective $A$-modules.

Now let $A$ be a finite dimensional algebra with $\gldim A < \infty$. Since $D^b(A) \cong K^b (_A\mathcal{P})$, we identify these two triangulated categories. For every indecomposable object $0 \neq P^{\bullet} \in K^b(_A \mathcal{P})$, consider its preimages $\tilde{P}^{\bullet} \in C^b (_A\mathcal{P})$, the category of so-called \textit{perfect complexes}, i.e., each term of $\tilde{P} ^{\bullet}$ is a finitely generated projective $A$-module, and all but finitely many terms of $\tilde{P} ^{\bullet}$ are 0. We can choose $\tilde{P} ^{\bullet}$ such that it is \textit{minimal}. That is, $\tilde{P} ^{\bullet}$ has no direct summands isomorphic to
\begin{equation*}
\xymatrix{ \ldots \ar[r] & 0 \ar[r] & P^s \ar[r]^{id} & P^{s+1} \ar[r] & 0 \ar[r] & \ldots},
\end{equation*}
or equivalently, each differential map in $\tilde{P} ^{\bullet}$ sends its source into the radical of the subsequent term. Clearly, this choice is unique for each $P^{\bullet} \in K^b( _A \mathcal{P})$ up to isomorphism, so in the rest of this note we identify $P^{\bullet}$ and $\tilde{P}^{\bullet}$. Hopefully this identification will not cause trouble to the reader.

Take an indecomposable $P^{\bullet} \in K^b(_A \mathcal{P})$ and identify it with $\tilde{P}^{\bullet}$. Therefore, there exist $r \leqslant s \in \mathbb{Z}$ such that $P^r \neq 0 \neq P^s$, and $P^n = 0$ for $n > s$ or $n < r$. We define its length $l(P^{\bullet})$ to be $s - r$. \footnote{By this definition, the length of a minimal object $X^{\bullet} \in K^b (_AP)$ counts the number of arrows between the first nonzero term and the last nonzero term in $X^{\bullet}$, rather than the number of nonzero terms in $X^{\bullet}$. In particular, a projective $\Lambda$-module, when viewed as a stalk complex in $K^b (_AP)$, has length 0.} The \textit{strong global dimension} of $A$, denoted by $\sgldim A$, is defined as $\sup \{ l(P^{\bullet}) \mid P^{\bullet} \in K^b (_A \mathcal{P}) \text{ is indecomposable} \}$. By taking minimal projective resolutions of simple modules, it is easy to see that $\sgldim A \geqslant \gldim A$. Moreover, if $A$ is hereditary, then $\sgldim A = \gldim A \leqslant 1$ (see \cite{Happel1}). It is not clear for what algebras of finite global dimension, $\sgldim A = \gldim A$.

Happel and Zacharia proved the following result, characterizing piecewise hereditary algebras.

\begin{theorem}
(Theorem 3.2 in \cite{Happel3}) A finite dimensional $k$-algebra is piecewise hereditary if and only if $\sgldim A < \infty$.
\end{theorem}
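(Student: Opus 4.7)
The plan is to prove the two directions separately. Since this is a theorem of Happel and Zacharia, the citation above stands in place of a proof; here I sketch the approach I would take.

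For the forward direction, suppose $F : D^b(A) \xrightarrow{\sim} D^b(\mathcal{H})$ for a hereditary abelian category $\mathcal{H}$. The structural fact I would invoke is that in the derived category of a hereditary category every indecomposable object splits as a shift $M[n]$ of an indecomposable $M \in \mathcal{H}$, since vanishing of $\Ext^{\geq 2}$ forces each complex to split off its cohomology stalks. Writing $F(P_i) = M_i[k_i]$ for each indecomposable projective $P_i$ (with $M_i$ indecomposable in $\mathcal{H}$) and $F(X^{\bullet}) = N[m]$ for an arbitrary indecomposable minimal complex $X^{\bullet} \in K^b({}_A\mathcal{P})$, the identity
\[
e_i H^j(X^{\bullet}) \;=\; \Hom_{D^b(A)}(P_i, X^{\bullet}[j]) \;=\; \Ext^{m+j-k_i}_{\mathcal{H}}(M_i, N)
\]
vanishes unless $m + j - k_i \in \{0,1\}$, so the cohomology of $X^{\bullet}$ is supported in a range of diameter at most $\max_i k_i - \min_i k_i + 1$. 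Running the same computation with simple $A$-modules in place of $X^{\bullet}$ bounds $\gldim A$ by an analogous expression involving the shifts of the simples. A Cartan--Eilenberg construction then produces a projective complex quasi-isomorphic to $X^{\bullet}$ of length at most the cohomology diameter plus $\gldim A$, and since $X^{\bullet}$ is minimal its length is bounded by the same quantity. This uniform bound yields $\sgldim A < \infty$.

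For the reverse direction, suppose $\sgldim A = d < \infty$. The task is to construct a hereditary abelian category $\mathcal{H}$ with $D^b(\mathcal{H}) \simeq D^b(A)$. My approach would be to produce a bounded t-structure on $D^b(A)$ whose heart is hereditary; the heart would then serve as $\mathcal{H}$, with the realization functor providing the required equivalence (automatic once the heart is hereditary and generates $D^b(A)$ as a triangulated category). Starting from the standard t-structure, with heart $A$-mod, one would iteratively HRS-tilt along suitably chosen torsion pairs, using the finiteness of $d$ to argue that after finitely many tilts the heart acquires vanishing $\Ext^{\geq 2}$. The main obstacle is to show that this iterative process actually terminates with a hereditary heart, which requires careful bookkeeping of how the length filtration on $K^b({}_A\mathcal{P})$ transforms under torsion-theoretic tilts. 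A plausible alternative strategy is to construct a tilting complex $T \in D^b(A)$ whose endomorphism ring is a hereditary algebra and appeal to Rickard's theorem; either route encounters a similar technical difficulty.

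The hard part is unquestionably the reverse direction. The bound on $\sgldim A$ is a numerical invariant of indecomposables in $K^b({}_A\mathcal{P})$, whereas producing $\mathcal{H}$ requires extracting an abelian subcategory from the triangulated structure of $D^b(A)$; closing this gap between the numerical and the categorical is where the real substance of the Happel--Zacharia theorem resides.
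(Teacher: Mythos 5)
This statement is not proved in the paper at all: it is quoted verbatim as an external result of Happel and Zacharia, and the citation is the entire ``proof.'' So your decision to let the citation stand in place of a proof is exactly what the paper does, and to that extent there is nothing to compare. Since you went on to sketch an argument, a few comments. Your forward direction is essentially the right outline: indecomposables in $D^b(\mathcal{H})$ for $\mathcal{H}$ hereditary are shifts $M[n]$ of indecomposables of $\mathcal{H}$, and computing $\Hom_{D^b(A)}(P_i, X^{\bullet}[j])$ against the images of the projectives bounds the cohomological support and hence the length of a minimal perfect complex. This is in substance how the ``only if'' direction is argued in the literature.

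The reverse direction of your sketch, however, has a genuine gap, and you name it yourself: you do not show that the iterated HRS-tilting process terminates in a hereditary heart, and nothing in the sketch uses the hypothesis $\sgldim A = d < \infty$ in a way that would force termination. That is not a technicality to be filled in later; it is the entire content of the hard direction. It is also not the route Happel and Zacharia take: rather than tilting hearts inside $D^b(A)$, they analyze the indecomposable objects of $K^b({}_A\mathcal{P})$ directly, using the finiteness of $\sgldim A$ together with the structure theory of quasi-tilted algebras from Happel--Reiten--Smal{\o} and an inductive reduction on the length of indecomposable complexes to produce the hereditary category. If you intend your sketch to be more than a restatement of the citation, you would need either to carry out their reduction or to supply the missing termination argument for your tilting scheme; as written, the reverse direction is a plan, not a proof.
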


In \cite{Dionne} Dionne, Lanzilotta, and Smith show that if $\Lambda$ is a piecewise hereditary algebra, and $|G|$ is invertible in $k$, then the skew group algebra $\Lambda G$ is also piecewise hereditary. This result motivates us to characterize general piecewise hereditary skew group algebras. Using the above characterization, we take a different approach by showing that $\sgldim \Lambda G = \sgldim \Lambda$ under suitable conditions.

We first prove a result similar to Lemma 2.4.

\begin{lemma}
Let $H$ be a subgroup of $G$ and suppose that $\gldim \Lambda G < \infty$. Then $\sgldim \Lambda H \leqslant \sgldim \Lambda G$. The equality holds if $|G : H|$ is invertible in $k$.
\end{lemma}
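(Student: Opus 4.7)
The approach is to mimic the proof of Lemma 2.4, replacing minimal projective resolutions of modules with minimal perfect complexes in $K^b({}_A\mathcal{P})$. Since both induction $\uparrow_H^G = \Lambda G\otimes_{\Lambda H}-$ and restriction $\downarrow_H^G$ are exact and preserve finitely generated projective modules, they induce functors between $K^b({}_{\Lambda H}\mathcal{P})$ and $K^b({}_{\Lambda G}\mathcal{P})$. The two adjunction splittings underlying Proposition 2.1 are given by natural transformations of modules (for (1), the retraction sending $\lambda g\otimes v$ to $\lambda gv$ if $g\in H$ and to $0$ otherwise; for (2), the normalized trace section when $|G:H|$ is invertible in $k$), so they apply termwise to complexes to give chain-level direct-summand relations. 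Combined with Krull--Schmidt in $K^b({}_A\mathcal{P})$ for any finite dimensional $A$, this allows us to extract indecomposable summands of controlled length in either direction.

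For $\sgldim \Lambda H\leqslant \sgldim \Lambda G$, take an indecomposable minimal $P^\bullet\in K^b({}_{\Lambda H}\mathcal{P})$ of length $l$ and form $\tilde P^\bullet = P^\bullet\uparrow_H^G$; since $\Lambda G$ is free as a left $\Lambda H$-module, this perfect complex has nonzero terms in exactly the positions of $P^\bullet$. Decompose its minimal form $\tilde P^\bullet\cong\bigoplus_j Q_j^\bullet$ in $K^b({}_{\Lambda G}\mathcal{P})$ into indecomposable minimal summands, so $l(Q_j^\bullet)\leqslant\sgldim\Lambda G$. By Proposition 2.1(1) applied termwise, $P^\bullet$ is a direct summand of $\tilde P^\bullet\downarrow_H^G\cong\bigoplus_j Q_j^\bullet\downarrow_H^G$, and Krull--Schmidt places it inside a single $Q_j^\bullet\downarrow_H^G$. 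Since passing to the minimal form of a perfect complex only cancels contractible summands from the outside in, it can only decrease length, and we conclude
\begin{equation*}
l\;=\;l(P^\bullet)\;\leqslant\;l\bigl((Q_j^\bullet\downarrow_H^G)_{\min}\bigr)\;\leqslant\;l(Q_j^\bullet)\;\leqslant\;\sgldim\Lambda G.
\end{equation*}
When $|G:H|$ is invertible in $k$, the reverse inequality follows from the symmetric argument: for an indecomposable minimal $Q^\bullet\in K^b({}_{\Lambda G}\mathcal{P})$ of length $l$, decompose the minimal form of $Q^\bullet\downarrow_H^G$ as $\bigoplus_i R_i^\bullet$ with $R_i^\bullet$ indecomposable and minimal over $\Lambda H$, and invoke Proposition 2.1(2) to realise $Q^\bullet$ as a summand of $Q^\bullet\downarrow_H^G\uparrow_H^G\cong\bigoplus_i R_i^\bullet\uparrow_H^G$, hence of some $R_i^\bullet\uparrow_H^G$; the same length comparison gives $l\leqslant l(R_i^\bullet)\leqslant\sgldim\Lambda H$.

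The main technical point to watch is that induction and restriction need not preserve minimality of perfect complexes: both $\tilde P^\bullet$ and $Q^\bullet\downarrow_H^G$ may acquire differentials with identity components, so their minimal forms can be strictly shorter than the raw complexes. Fortunately the length comparison we need runs in the favourable direction, since the length of an indecomposable summand (in its own minimal form) never exceeds the raw length of any ambient complex of which it is a summand. The only other small item requiring verification is the naturality of the two splittings in Proposition 2.1, which is immediate from the explicit formulas above, and lets the module-level summand relations pass to the homotopy category.
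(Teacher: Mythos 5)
Your proof is correct and follows essentially the same route as the paper: termwise induction and restriction of minimal perfect complexes, with the module-level splittings of Proposition 2.1 applied naturally in each degree (the paper realizes these as explicit chain maps $\iota^{\bullet},\delta^{\bullet}$ and $\theta^{\bullet},\rho^{\bullet}$), followed by extraction of an indecomposable summand and a length comparison. Your treatment of the Krull--Schmidt step and of the fact that minimalization only shortens a perfect complex is in fact more careful than the paper's, which asserts the inequality $l(\tilde{X}^{\bullet}) \geqslant l(X^{\bullet})$ as ``clear.''
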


\begin{proof}
Note that $\gldim \Lambda H \leqslant \gldim \Lambda G < \infty$ by (2) of Corollary 2.2 in \cite{Li}. Take an indecomposable object $P^{\bullet} \in K^b (_{\Lambda H} \mathcal{P})$. Up to isomorphism, its minimal preimage in $C^b (_{\Lambda H} \mathcal{P})$ can be written as:
\begin{equation*}
\xymatrix{ \ldots \rightarrow 0 \rightarrow P^r \ar[r]^-{d^r} & \ldots \ar[r] & P^{s-1} \ar[r]^-{d^{s-1}} & P^s \rightarrow 0 \rightarrow \ldots}
\end{equation*}
Applying the exact functor $\Lambda G \otimes _{\Lambda H} -$ termwise to the above complex, we get an object $\tilde{P} ^{\bullet} = P^{\bullet} \uparrow _H^G \in C^b (_{\Lambda G} \mathcal{P})$ as follows:
\begin{equation*}
\xymatrix{ \ldots \rightarrow 0 \rightarrow \Lambda G \otimes P^r \ar[r]^-{1 \otimes d^r} & \ldots \ar[r] & \Lambda G \otimes P^{s-1} \ar[r]^-{1 \otimes d^{s-1}} & \Lambda G \otimes P^s \rightarrow 0 \rightarrow \ldots}
\end{equation*}
Applying the restriction functor termwise, the second complex gives an object $\tilde{P}^{\bullet} \downarrow _H^G \in C^b (_{\Lambda H} \mathcal{P})$. We claim that $P^{\bullet}$ is isomorphic to a direct summand of $\tilde{P} ^{\bullet} \downarrow _H^G$.

Indeed, there is a family of maps $(\iota ^i) _{i \in \mathbb{Z}}$ defined as follows: for $i > s$ or $i < r$, $\iota_i = 0$; for $r \leqslant i \leqslant s$, $\iota^i$ sends $v \in P^i$ to $1 \otimes v \in \Lambda G \otimes _{\Lambda H} P^i$. The reader can check that $(\iota^i) _{i \in \mathbb{Z}}$ defined in this way indeed gives rise to a chain map $\iota^{\bullet}: P^{\bullet} \rightarrow \tilde{P} ^{\bullet} \downarrow _H^G$.

We define another family of maps $(\delta ^i) _{i \in \mathbb{Z}}$ as follows: for $i > s$ or $i < r$, $\delta^i = 0$; for $r \leqslant i \leqslant s$, $\delta^i$ sends $h \otimes v \in \Lambda H \otimes _{\Lambda H} P^i$ to $hv \in P^i$ and sends all vectors in $g \otimes v \in \bigoplus _{1 \neq g \in G/H} g \otimes P^i$ to 0. Again, the reader can check that $(\delta ^i) _{i \in \mathbb{Z}}$ gives rise to a chain map $\delta^{\bullet}: \tilde{P} ^{\bullet} \downarrow _H^G \rightarrow P^{\bullet}$. Moreover, we have $\delta^{\bullet} \circ \iota^{\bullet}$ is the identity map. Therefore, $P^{\bullet}$ is isomorphic to a direct summand of $\tilde{P} ^{\bullet} \downarrow _H^G$.

This claim has the following consequence: for every indecomposable object $X^{\bullet} \in K^b (_{\Lambda H} \mathcal{P})$ (identified with a minimal perfect complex in $C^b (_{\Lambda H} \mathcal{P})$), there is an indecomposable object $\tilde{X}^{\bullet} \in K^b (_{\Lambda G} \mathcal{P})$ such that $X^{\bullet}$ is isomorphic to a direct summand of $\tilde{X} ^{\bullet} \downarrow _H^G$. Clearly, we have $l( \tilde{X} ^{\bullet}) \geqslant l(X^{\bullet})$. Therefore, by definition, $\sgldim \Lambda G \geqslant \sgldim \Lambda H$.

Now suppose that $|G : H|$ is invertible in $k$. Take an indecomposable object $P^{\bullet} \in K^b (_{\Lambda G} \mathcal{P})$. Up to isomorphism, its minimal preimage in $C^b (_{\Lambda G} \mathcal{P})$ can be written as:
\begin{equation*}
\xymatrix{ \ldots \rightarrow 0 \rightarrow P^r \ar[r]^-{d^r} & \ldots \ar[r] & P^{s-1} \ar[r]^-{d^{s-1}} & P^s \rightarrow 0 \rightarrow \ldots}
\end{equation*}
Applying the restriction functor and the induction functor termwise to the above complex, we get another object in $\tilde{P} ^{\bullet} = P^{\bullet} \downarrow _H^G \uparrow _H^G \in C^b (_{\Lambda G} \mathcal{P})$ as follows:
\begin{equation*}
\xymatrix{ \ldots \rightarrow 0 \rightarrow \Lambda G \otimes P^r \ar[r]^-{1 \otimes d^r} & \ldots \ar[r] & \Lambda G \otimes P^{s-1} \ar[r]^-{1 \otimes d^{s-1}} & \Lambda G \otimes P^s \rightarrow 0 \rightarrow \ldots}
\end{equation*}
We claim that $P^{\bullet}$ is isomorphic to a direct summand of $\tilde{P} ^{\bullet}$.

Define a family of maps $(\theta ^i) _{i \in \mathbb{Z}}$ as follows: for $i > s$ or $i < r$, $\theta^i = 0$; for $r \leqslant i \leqslant s$, $\theta^i$ sends $v \in P^i$ to $\frac{1} {|G: H|} \sum _{g \in G/H} g \otimes g^{-1}v \in \Lambda G \otimes _{\Lambda H} P^i$. We check that $(\theta^i) _{i \in \mathbb{Z}}$ defined in this way gives rise to a chain map $\theta^{\bullet}: P^{\bullet} \rightarrow \tilde{P} ^{\bullet}$. Indeed, for $r \leqslant i \leqslant s-1$ and $v \in P^i$, we have
\begin{align*}
(\theta^{i+1} \circ d^i)(v) & = \theta^{i+1} (d^i(v)) = \frac{1} {|G: H|} \sum _{g \in G/H} g \otimes g^{-1} d^i(v) \\
& = \frac{1} {|G: H|} \sum _{g \in G/H} g \otimes d^i(g^{-1}v) = (1 \otimes d^i) (\theta^{i+1} (v)).
\end{align*}

Define another family of maps $(\rho^i) _{i \in \mathbb{Z}}$ as follows: for $i > s$ or $i < r$, $\rho^i = 0$; for $r \leqslant i \leqslant s$, $\rho^i$ sends $g \otimes v \in \Lambda G \otimes _{\Lambda H} P^i$ to $gv \in P^i$. Again, we check that $(\rho^i) _{i \in \mathbb{Z}}$ gives rise to a chain map $\rho^{\bullet}: \tilde{P} ^{\bullet} \rightarrow P^{\bullet}$ as shown by:
\begin{equation*}
(\rho^{i+1} \circ (1 \otimes d^i))(g \otimes v) = \rho^{i+1} (g \otimes d^i(v))) = g d^i (v) = d^i (gv) = d^i (\rho^i (g \otimes v)).
\end{equation*}
Moreover, $\rho^{\bullet} \circ \theta^{\bullet}$ is the identity map. Therefore, as claimed, $P^{\bullet}$ is isomorphic to a summand of $\tilde{P} ^{\bullet}$.

This claim tells us that for every indecomposable object $\tilde{X} ^{\bullet} \in K^b (_{\Lambda G} \mathcal{P})$ (identified with a minimal perfect complex in $C^b (_{\Lambda G} \mathcal{P})$), there is an indecomposable object $X^{\bullet} \in K^b (_{\Lambda H} \mathcal{P})$ such that $\tilde{X} ^{\bullet}$ is isomorphic to a direct summand of $X^{\bullet} \uparrow _H^G$. Therefore, $l(X^{\bullet}) \geqslant l( \tilde{X} ^{\bullet})$, and $\sgldim \Lambda H \geqslant \sgldim \Lambda G$. The two inequalities force $\sgldim \Lambda G = \sgldim \Lambda H$.
\end{proof}

\begin{remark}\normalfont
In this lemma we actually proved a stronger conclusion. That is, the induction and restriction functor induce an `induction' functor and a `restriction' functor between the homotopy categories of perfect complexes. Moreover, every indecomposable object $X^{\bullet} \in K^b (_{\Lambda H} \mathcal{P})$ can be obtained by applying the `restriction' functor to an indecomposable object in $K^b (_{\Lambda G} \mathcal{P})$ and taking a direct summand. When $|G:H|$ is invertible, every indecomposable object $\tilde{X} ^{\bullet} \in K^b (_{\Lambda G} \mathcal{P})$ can be obtained by applying the `induction' functor to an indecomposable object in $K^b (_{\Lambda H} \mathcal{P})$ and taking a direct summand.
\end{remark}

Now we are ready to prove the second part of our main theorem.

\begin{proposition}
Let $\Lambda$, $G$, $S$, and $E$ as before. Then $\Lambda G$ has finite strong global dimension if and only if $\Lambda ^S$ has finite strong global dimension and $S$ acts freely on $E$. In this situation, $\Lambda G$ and $\Lambda ^S$ have the same strong global dimension.
\end{proposition}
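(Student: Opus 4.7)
The plan is to assemble the proposition from two pieces already set up in the paper: Lemma 3.2 (strong global dimension is preserved when passing between $\Lambda G$ and $\Lambda S$ because $|G:S|$ is invertible) and Proposition 2.3 (Morita equivalence between $\Lambda S$ and $\Lambda^S$ when $S$ acts freely on $E$). Since Morita equivalent algebras have equivalent module categories, and hence equivalent bounded derived categories and equivalent homotopy categories of perfect complexes, they share the same strong global dimension; I would invoke this as a standing fact (it can also be seen directly from the definition via indecomposable minimal perfect complexes).

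For the forward direction, I would start by assuming $\sgldim \Lambda G < \infty$. Then $\gldim \Lambda G \leqslant \sgldim \Lambda G < \infty$, so the hypothesis of Lemma 3.2 is met. Applying Lemma 3.2 with $H = S$, and using that $|G:S|$ is coprime to $p$ and hence invertible in $k$, yields $\sgldim \Lambda S = \sgldim \Lambda G < \infty$. In particular $\gldim \Lambda S < \infty$, so by part (3) of Proposition 2.3 the action of $S$ on $E$ must be free. With freeness in hand, part (4) of Proposition 2.3 gives a Morita equivalence $\Lambda S \sim \Lambda^S$, whence $\sgldim \Lambda^S = \sgldim \Lambda S = \sgldim \Lambda G$.

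For the reverse direction, I would assume $\sgldim \Lambda^S < \infty$ and that $S$ acts freely on $E$. Morita equivalence (Proposition 2.3(4)) gives $\sgldim \Lambda S = \sgldim \Lambda^S < \infty$, and in particular $\gldim \Lambda S < \infty$. To apply Lemma 3.2 I need $\gldim \Lambda G < \infty$, and this follows from Corollary 2.2: for any simple $\Lambda G$-module $M$, since $|G:S|$ is invertible in $k$, $\pd_{\Lambda G} M = \pd_{\Lambda S} M\!\downarrow_S^G \leqslant \gldim \Lambda S$, so $\gldim \Lambda G \leqslant \gldim \Lambda S < \infty$. Lemma 3.2 then produces $\sgldim \Lambda G = \sgldim \Lambda S = \sgldim \Lambda^S$, giving both the finiteness claim and the equality of dimensions.

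The main obstacle, such as it is, is justifying Morita invariance of strong global dimension; this is the only ingredient not already packaged in the earlier lemmas. I expect a one-line appeal to the fact that Morita equivalence induces a triangle equivalence on $K^b(\mathcal{P})$ and preserves indecomposability and length of minimal perfect complexes. Beyond that, the proof is essentially a bookkeeping exercise stringing together Lemma 3.2 and Proposition 2.3, with the $\gldim < \infty$ hypothesis of Lemma 3.2 supplied for free in the forward direction and via Corollary 2.2 in the reverse direction.
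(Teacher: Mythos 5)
Your proof is correct and follows essentially the same route as the paper: reduce from $\Lambda G$ to $\Lambda S$ via Lemma 3.2 (using that $|G:S|$ is invertible in $k$), then use Proposition 2.3(3) to extract freeness of the action from $\gldim \Lambda S < \infty$, and Proposition 2.3(4) to transfer the strong global dimension across the Morita equivalence with $\Lambda^S$. You are in fact slightly more careful than the paper, which invokes the reduction to $G=S$ without checking the hypothesis $\gldim \Lambda G < \infty$ of Lemma 3.2 in the reverse direction; your appeal to Corollary 2.2 to supply it closes that small gap.
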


\begin{proof}
Since the strong global dimension equals that of $\Lambda S$, without loss of generality we assume that $G = S$. Note that the strong global dimension is always greater than or equal to the global dimension. Therefore, $\Lambda S$ has finite global dimension, so $S$ must act freely on $E$ by (3) of Proposition 2.3. Then by (4) of this proposition, $\Lambda S$ is Morita equivalent to $\Lambda ^S$, so they have the same finite strong global dimension. Conversely, if the action of $S$ on $E$ is free, then again $\Lambda S$ and $\Lambda^S$ are Morita equivalent, so they have the same strong global dimension.
\end{proof}

An immediate corollary of this proposition and Theorem 3.1 is:

\begin{corollary}
Let $\Lambda$, $G$, $S$, and $E$ as before. Then $\Lambda G$ is piecewise hereditary if and only if the action of $S$ on $E$ is free and $\Lambda^S$ is piecewise hereditary.
\end{corollary}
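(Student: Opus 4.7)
The plan is essentially a two-step chain of equivalences, invoking the Happel--Zacharia characterization (Theorem 3.1) once for $\Lambda G$ and once for $\Lambda^S$, with the preceding Proposition supplying the bridge between them. No genuine new work is required: all the substance lives in the finiteness statement about strong global dimensions that has just been established.

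Concretely, I would first apply Theorem 3.1 to the algebra $\Lambda G$, obtaining that $\Lambda G$ is piecewise hereditary if and only if $\sgldim \Lambda G < \infty$. I would then invoke the preceding Proposition, which asserts that $\sgldim \Lambda G < \infty$ holds precisely when the action of $S$ on $E$ is free and $\sgldim \Lambda^S < \infty$. Finally, I would apply Theorem 3.1 a second time, now to the fixed algebra $\Lambda^S$, replacing the condition $\sgldim \Lambda^S < \infty$ by the condition that $\Lambda^S$ is piecewise hereditary. Concatenating these three equivalences yields the corollary.

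There is no real obstacle here; the only point requiring attention is the bookkeeping, namely that Theorem 3.1 is valid for \emph{any} finite dimensional $k$-algebra and therefore applies equally well to $\Lambda G$ and to $\Lambda^S$, so it may be cited on both sides of the equivalence established in the Proposition.
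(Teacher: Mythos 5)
Your proposal is correct and coincides with the paper's own proof, which likewise chains Theorem 3.1 (applied to both $\Lambda G$ and $\Lambda^S$) with the preceding proposition on strong global dimensions. No gaps; your version simply spells out the bookkeeping slightly more explicitly.
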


\begin{proof}
By Theorem 3.1, a finite dimensional algebra is piecewise hereditary if and only if its strong global dimension if finite. The conclusion follows from the above proposition.
\end{proof}

In the rest of this section we try to get a get a criterion such that the strong global dimension of $\Lambda$ equals that of $\Lambda S$ when the action of $S$ on $E$ is free. Since we already know that $\Lambda S$ is Morita equivalent to $\Lambda^S$ (Proposition 2.3), instead we show that $\sgldim \Lambda = \sgldim \Lambda ^S$ under a certain condition. Note that $\Lambda^S$ is a subalgebra of $\Lambda$, so we can define the corresponding restriction functor from $\Lambda$-mod to $\Lambda^S$-mod and the induction functor $\Lambda \otimes _{\Lambda^S} -$ from $\Lambda^S$-mod to $\Lambda$-mod.

By \cite{Li}, $\Lambda$ is both a left free and a right free $\Lambda^S$-module, but might not be a free bimodule; see Example 3.6 in that paper. Now assume that $\Lambda = \Lambda^S \oplus B$ as $\Lambda^S$-bimodule. For instance, if $\Lambda^S$ is commutative, this condition is satisfied. Under this assumption, we have a split bimodule homomorphism $\zeta: \Lambda \to \Lambda^S$.

For $M \in \Lambda^S$-mod, we define two linear maps:
\begin{align*}
\psi: M \rightarrow (\Lambda \otimes _{\Lambda ^S} M) \downarrow _{\Lambda ^S} ^{\Lambda}, \quad v \mapsto 1 \otimes v, \quad v \in M;\\
\varphi: (\Lambda \otimes _{\Lambda ^S} M) \downarrow _{\Lambda ^S} ^{\Lambda} \rightarrow M, \quad \lambda \otimes v \mapsto \zeta(\lambda) v, \quad v \in M, \lambda \in \Lambda.
\end{align*}
These two maps are well defined $\Lambda^S$-module homomorphisms (to check it, we need the assumption that $A^S$ is a summand of $\Lambda$ as $\Lambda^S$-bimodules). We also observe that when $\lambda \in \Lambda ^S$,
\begin{equation*}
\varphi (\lambda \otimes v) = \zeta (\lambda) v = \lambda v.
\end{equation*}
Therefore, $\varphi \circ \psi$ is the identity map.

\begin{proposition}
Suppose that a Sylow $p$-subgroup $S \leqslant G$ acts freely on $E$, and $\Lambda \cong \Lambda^S \oplus B$ as $\Lambda^S$-bimodules. Then $\sgldim \Lambda = \sgldim \Lambda S$.
\end{proposition}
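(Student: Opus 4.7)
The plan is to prove the chain
\[
\sgldim \Lambda S \;=\; \sgldim \Lambda^S \;\leqslant\; \sgldim \Lambda \;\leqslant\; \sgldim \Lambda S,
\]
from which the claimed equality follows. The outer equality is immediate from part (4) of Proposition 2.3 since strong global dimension is a Morita invariant. The rightmost inequality is Lemma 3.2 applied with $G$ replaced by $S$ and $H$ taken to be the trivial subgroup (so that $\Lambda H \cong \Lambda$); the finiteness hypothesis of that lemma is harmless, since the target inequality is vacuous when $\sgldim \Lambda S = \infty$, and otherwise $\gldim \Lambda S \leqslant \sgldim \Lambda S < \infty$ and Lemma 3.2 applies verbatim.

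The substantive step is the middle inequality $\sgldim \Lambda^S \leqslant \sgldim \Lambda$. I plan to mimic the first half of the Lemma 3.2 construction, with the subalgebra inclusion $\Lambda^S \hookrightarrow \Lambda$ playing the role of $\Lambda H \subseteq \Lambda G$. Since $\Lambda$ is free as a one-sided $\Lambda^S$-module on either side, the induction functor $\Lambda \otimes_{\Lambda^S} -$ and the corresponding restriction functor are both exact and preserve projectives. Given an indecomposable minimal perfect complex $Q^{\bullet} \in K^b(_{\Lambda^S}\mathcal{P})$, I form the termwise induction $\tilde{Q}^{\bullet} = \Lambda \otimes_{\Lambda^S} Q^{\bullet} \in C^b(_{\Lambda}\mathcal{P})$ and assemble the maps $\psi$ and $\varphi$ defined just before the proposition into chain maps $\psi^{\bullet}\colon Q^{\bullet} \to \tilde{Q}^{\bullet}\downarrow$ and $\varphi^{\bullet}\colon \tilde{Q}^{\bullet}\downarrow \to Q^{\bullet}$, in the roles played by $\iota^{\bullet}$ and $\delta^{\bullet}$ in Lemma 3.2. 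The chain map conditions reduce to a short check that uses only the $\Lambda^S$-linearity of each differential $d^i$ of $Q^{\bullet}$ together with the fact that $\zeta$ is a $\Lambda^S$-bimodule homomorphism. Since $\varphi^{\bullet} \circ \psi^{\bullet} = \mathrm{id}$, the complex $Q^{\bullet}$ is a direct summand of $\tilde{Q}^{\bullet}\downarrow$ in $K^b(_{\Lambda^S}\mathcal{P})$.

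Decomposing $\tilde{Q}^{\bullet}$ into indecomposable summands in $K^b(_{\Lambda}\mathcal{P})$ and then restricting termwise, the Krull--Schmidt theorem realizes $Q^{\bullet}$ as an indecomposable summand of $X^{\bullet}\downarrow$ for some indecomposable $X^{\bullet} \in K^b(_{\Lambda}\mathcal{P})$. Because restriction preserves the underlying graded vector space, $l(Q^{\bullet}) \leqslant l(X^{\bullet}) \leqslant \sgldim \Lambda$, and taking the supremum over $Q^{\bullet}$ yields $\sgldim \Lambda^S \leqslant \sgldim \Lambda$. The main obstacle I anticipate is precisely the chain map compatibility of $\psi^{\bullet}$ and $\varphi^{\bullet}$; this is the place where the bimodule hypothesis $\Lambda \cong \Lambda^S \oplus B$ is genuinely used, through the bimodule splitting $\zeta$. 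Once this step is verified, everything else is formal Krull--Schmidt bookkeeping inherited from Lemma 3.2.
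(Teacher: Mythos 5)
Your proposal is correct and follows essentially the same route as the paper: the outer (in)equalities come from Morita invariance via Proposition 2.3(4) and from Lemma 3.2 with $H$ trivial, and the substantive inequality $\sgldim \Lambda^S \leqslant \sgldim \Lambda$ is proved exactly as in the paper, by turning $\psi$ and $\varphi$ into chain maps with $\varphi^{\bullet} \circ \psi^{\bullet} = \mathrm{id}$ so that each indecomposable perfect complex over $\Lambda^S$ is a summand of the restriction of one over $\Lambda$. Your explicit handling of the finiteness hypothesis in Lemma 3.2 is a small point the paper leaves implicit, but the argument is otherwise identical.
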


\begin{proof}
By Lemma 3.2 and Proposition 3.4, $\sgldim \Lambda \leqslant \sgldim \Lambda S = \sgldim \Lambda^S$. Therefore, it suffices to show $\sgldim \Lambda \geqslant \sgldim \Lambda^S$. The proof is similar to that of Lemma 3.2, using the corresponding induction and restriction functors for the pair $(\Lambda^S, \Lambda)$ and the two maps $\psi, \varphi$ defined above.

Take an indecomposable object $P^{\bullet} \in K^b (_{\Lambda^S} \mathcal{P})$ and write its minimal preimage in $C^b (_{\Lambda^S} \mathcal{P})$ as follows:
\begin{equation*}
\xymatrix{ \ldots \rightarrow 0 \rightarrow P^r \ar[r]^-{d^r} & \ldots \ar[r] & P^{s-1} \ar[r]^-{d^{s-1}} & P^s \rightarrow 0 \rightarrow \ldots}
\end{equation*}
Applying $\Lambda \otimes _{\Lambda^S} -$ and the restriction functor termwise, we get another object in $\tilde{P} ^{\bullet} = P^{\bullet} \uparrow _{\Lambda^S} ^{\Lambda} \downarrow _{\Lambda^S} ^{\Lambda} \in C^b (_{\Lambda^S} \mathcal{P})$ :
\begin{equation*}
\xymatrix{ \ldots \rightarrow 0 \rightarrow \Lambda \otimes P^r \ar[r]^-{1 \otimes d^r} & \ldots \ar[r] & \Lambda \otimes P^{s-1} \ar[r]^-{1 \otimes d^{s-1}} & \Lambda \otimes P^s \rightarrow 0 \rightarrow \ldots}
\end{equation*}
We claim that $P^{\bullet}$ is isomorphic to a direct summand of $\tilde{P} ^{\bullet}$.

As in the proof of the previous lemma, $(\psi_i) _{i \in \mathbb{Z}}$ gives rise to a chain map $\psi^{\bullet}: P^{\bullet} \rightarrow \tilde{P} ^{\bullet}$. We check the commutativity: for $r \leqslant i \leqslant s-1$ and $v \in P^i$,
\begin{equation*}
(\psi_{i+1} \circ d^i)(v) = \psi_{i+1} (d^i(v)) = 1 \otimes d^i(v) = (1 \otimes d^i) (1 \otimes v) = (1 \otimes d^i) (\psi_i (v)).
\end{equation*}
Similarly, the family of maps $(\varphi_i) _{i \in \mathbb{Z}}$ gives rise to a chain map $\varphi^{\bullet}: \tilde{P} ^{\bullet} \rightarrow P^{\bullet}$ as shown by:
\begin{align*}
(\varphi_{i+1} \circ (1 \otimes d^i))(\lambda \otimes v) & = \varphi_{i+1} (\lambda \otimes d^i(v)) = \zeta(\lambda) d^i(v)\\
& =  d^i(\zeta (\lambda) v) = (d^i \circ \varphi_i)(\lambda \otimes v).
\end{align*}
Moreover, $\varphi ^{\bullet} \circ \psi ^{\bullet}$ is the identity map, so as claimed, $P^{\bullet}$ is isomorphic to a summand of $\tilde{P} ^{\bullet}$.

Therefore, for every indecomposable object $X^{\bullet} \in K^b (_{\Lambda^S} \mathcal{P})$, there is an indecomposable object $\tilde{X} ^{\bullet} \in K^b (_{\Lambda} \mathcal{P})$ such that $X^{\bullet}$ is isomorphic to a direct summand of $\tilde{X} ^{\bullet} \downarrow _{\Lambda^S} ^{\Lambda}$. Consequently, $l(X^{\bullet}) \leqslant l( \tilde{X} ^{\bullet})$, and $\sgldim \Lambda^S \leqslant \sgldim \Lambda$.
\end{proof}


\begin{thebibliography}{99}
\bibitem{Auslander} M. Auslander, I. Reiten, and S. Smal{\o}, \textit{Representation theory of Artin algebras}, Cambridge Studies in Advanced Mathematics 36, Cambridge University Press, (1997).
\bibitem{Boisen} P. Boisen, \textit{The representation theory of full group-graded algebras}, J. Algebra 151 (1992), 160-179.
\bibitem{Cohen} M. Cohen, S. Montgomery, \textit{Group-graded rings, smash products, and group actions}, Trans. Amer. Math. Soc. 282 (1984), 237-258.
\bibitem{Dionne} J. Dionne, M. Lanzilotta, and D. Smith, \textit{Skew group algebras of piecewise hereditary algebras are piecewise hereditary}, J. Pure Appl. Algebra 213 (2009), 241-249.
\bibitem{Happel1} D. Happel, \textit{Triangulated categories in the representation theory of finite-dimensional algebras}, Lond. Math. Soc. Lecture Note Series 119, Cambridge University Press, Cambridge, 1988.
\bibitem{Happel2} D. Happel, and D. Zacharia, \textit{A homological characterization of piecewise hereditary algebras}, Math. Z. 260 (2008), 177-185.
\bibitem{Happel3} D. Happel, and D. Zacharia, \textit{Homological properties of piecewise hereditary algebras}, J. Algebra 323 (2010), 1139-1154.
\bibitem{HRS} D. Happel, I. Reiten, and S. Smal{\o}, \textit{Piecesise hereditary algebras}, Arch. Math. 66 (1996), 182-186.
\bibitem{Li} L. Li, \textit{Representations of modular skew group algebras}, Trans. Amer. Math. Soc. 366 (2014), 931-977.
\bibitem{Marcus} A. Marcus, \textit{Representation theory of group graded algebras}, Nova Science Publishers, Inc., Commack, NY, 1999.
\bibitem{Martinez} R. Martinez, \textit{Skew group algebras and their Yoneda algebras}, Math. J. Okayama Univ. 43 (2001), 1-16.
\bibitem{Passman} D. Passman, \textit{Group rings, crossed products and Galois theory}, CBMS Regional Conference Series in Mathematics (64), 1986.
\bibitem{Reiten} I. Reiten, C. Riedtmann, \textit{Skew group algebras in the representation theory of Artin algebras}, J. Algebra 92 (1985), 224-282.
\bibitem{Zhong} Y. Zhong, \textit{Homological dimension of skew group rings and cross product}, J. Algebra 164 (1994), 101-123.
\bibitem{Zimmermann} B. Zimmermann, \textit{The finitistic dimension conjectures - a tale of 3.5 decades}, Abelian groups and modules (Padova, 1994), 501-517, Math. Appl., 343, Kluwer Acad. Publ., Dordrecht, 1995.
\end{thebibliography}
\end{document}